\theoremstyle{plain}
\newtheorem{theorem}{Theorem}
\newtheorem{claim}[theorem]{Claim}
\theoremstyle{definition}
\theoremstyle{remark}
\renewcommand{\d}{\mathrm{d}}
\newcommand{\diag}{\mathrm{diag}}
\newcommand{\eps}{\epsilon}
\newcommand{\norm}[1]{\lVert#1\rVert}
\newcommand{\bbm}{\begin{bmatrix}}
\newcommand{\ebm}{\end{bmatrix}}
\newcommand{\R}{\mathrm{R}}
\newcommand{\T}{\mathsf{T}}
\begin{document}

\title[Natural Gradient for Combined Loss]{Natural Gradient for Combined Loss Using Wavelets}

\author[]{Lexing Ying}
\address[Lexing Ying]{Department of Mathematics and ICME, Stanford University, Stanford, CA 94305}
\email{lexing@stanford.edu}

\thanks{The work of L.Y. is partially supported by the U.S. Department of Energy, Office of Science,
  Office of Advanced Scientific Computing Research, Scientific Discovery through Advanced Computing
  (SciDAC) program and also by the National Science Foundation under award DMS-1818449.
}

\keywords{Natural gradient, Fisher-Rao metric, Wasserstein metric, Mahalanobis metric, compactly
  supported wavelet, diagonal approximation.}


\begin{abstract}
  Natural gradients have been widely used in optimization of loss functionals over probability
  space, with important examples such as Fisher-Rao gradient descent for Kullback-Leibler
  divergence, Wasserstein gradient descent for transport-related functionals, and Mahalanobis
  gradient descent for quadratic loss functionals. This note considers the situation in which the
  loss is a convex linear combination of these examples. We propose a new natural gradient algorithm
  by utilizing compactly supported wavelets to diagonalize approximately the Hessian of the combined
  loss. Numerical results are included to demonstrate the efficiency of the proposed algorithm.
\end{abstract}

\maketitle

\section{Introduction}\label{sec:intro}

Many problems in partial differential equations and machine learning can be formulated as
optimization problems over probability densities. For a domain $\Omega$, let $E(p)$ be a loss or
energy functional defined for the probability densities $p$ over $\Omega$. The goal is to find $p^*$
that minimizes $E(p)$. A common approach, especially for $E(p)$ with a unique minimum, is to follow
the gradient descent (GD) dynamics. However, depending the metric used in the gradient calculation,
different gradient descent algorithms exhibit drastically different convergence behavior. The term
{\em natural gradient} refers to the practice of choosing an appropriate metric depending on the
loss functional $E(p)$ as well as the probability space. Below are several well-known examples of
natural gradient.
\begin{itemize}
\item Wasserstein GD that scales the Euclidean gradient $\frac{\delta E}{\delta p}(p)$ with the
  metric $-\nabla\cdot(p\nabla)$. Wasserstein GD is typically effective for a loss $E(p)$ that
  behaves like the square of the 2nd Wasserstein distance.
\item Fisher-Rao GD that scales the Euclidean gradient  $\frac{\delta E}{\delta p}(p)$ with the diagonal
  tensor $\diag(p)$. Fisher-Rao GD is quite effective for a loss $E(p)$ such as the Kullback-Leibler
  divergence $\int p(x) \ln\frac{p(x)}{\mu(x)} \d x$.
\item Mahalanobis GD that scales the Euclidean gradient $\frac{\delta E}{\delta p}(p)$ with a positive
  definite metric $B$. Mahalanobis GD is efficient for a quadratic loss of the form
  $\frac{1}{2}(p-\mu, A(p-\mu))$ with $B \approx A^{-1}$. In this note, we consider the case that $A$
  is a positive semidefinite pseudo-differential operator, for example $A=-\Delta$.
\end{itemize}
A general principle from these examples is that, for a natural gradient to be effective, the metric
used at the density $p$ should be an approximate inverse of the Hessian of the loss $E(p)$ at
$p$. In each of these three examples, an approximate inverse of the Hessian can be derived quite
explicitly.

\subsection{Problem statement.}

In several problems from kinetic theory and statistical machine learning, one is faced with a loss
or energy functional $E(p)$ that is a linear combination of these three forms mentioned above, i.e.,
\[
E(p) = \alpha_1 E_1(p) + \alpha_2 E_2(p) + \alpha_3 E_3(p),
\]
where $\alpha_1, \alpha_2, \alpha_3 \ge 0$ and $E_1$, $E_2$, and $E_3$ are of the Wasserstein,
Fisher-Rao, and Mahalanobis types, respectively, i.e.,
\[
\frac{\delta^2 E_1}{\delta p^2}(p) \approx (-\nabla\cdot(p\nabla))^+,\quad
\frac{\delta^2 E_2}{\delta p^2}(p) \approx \diag\left(\frac{1}{p}\right), \quad
\frac{\delta^2 E_3}{\delta p^2}(p) \approx A
\]
where $(\cdot)^+$ stands for pseudo-inverse. As a result, the Hessian of $E(p)$ has the following
approximation
\[
\frac{\delta^2 E}{\delta p^2}(p) =
\alpha_1 \frac{\delta^2 E_1}{\delta p^2}(p) +
\alpha_2 \frac{\delta^2 E_2}{\delta p^2}(p) +
\alpha_3 \frac{\delta^2 E_3}{\delta p^2}(p).
\]
None of three natural gradients listed above is effective for this combined loss functional, since
the inverse of $\frac{\delta^2 E}{\delta p^2}(p)$ looks quite different from
$-\nabla\cdot(p\nabla)$, $\diag(p)$, or $A^{-1}$.

An immediate question is design an efficient natural gradient (or even an approximate one) for the
combined loss $E(p)$. Due to the efficiency considerations, we prefer this natural gradient to have
the following features.
\begin{itemize}
\item It utilizes the Hessian information of $E_1(p)$, $E_2(p)$, and $E_3(p)$ in the design of the
  natural gradient.
\item It avoids forming and/or inverting the Hessian $\frac{\delta^2 E}{\delta p^2}(p)$ in order to
  avoid super-linear costs.
\item The computational cost of computing the natural gradient from $\frac{\delta E}{\delta p}(p)$
  should be of order $O(n\log^c n)$, where $n$ is the number of degrees of freedom used for
  discretizing $p$.
\end{itemize}

The main idea of our approach is to diagonalize $\frac{\delta^2 E}{\delta p^2}(p)$ approximately by
finding a common basis that diagonalizes each of the three terms $\frac{\delta^2 E_1}{\delta
  p^2}(p)$, $\frac{\delta^2 E_2}{\delta p^2}(p)$, and $\frac{\delta^2 E_3}{\delta p^2}(p)$
approximately at the same time. Among various choices, compactly supported wavelets emerge as a
natural candidate because they approximately diagonalize (1) differential operators, (2) diagonal
scaling by functions with sufficient regularity, and also (3) pseudo-differential operators.

\subsection{Related work.}

Fisher-Rao metric is essential to many branches of probability and statistics, as it is invariant
under diffeomorphisms. The study of Fishe-Rao and related metrics has evolved to become the field of
information geometry and we refer to \cite{amari2016information,ay2017information} for detail
discussions. Explicit time-discretization of the Fisher-Rao GD gives rise the mirror descent
algorithms \cite{beck2003mirror,bubeck2015convex,nemirovsky1983problem}, which plays an essential
role in online learning and optimization.

Originated from the theory of optimal transport, Wasserstein metric is defined formally as the
Hessian of the square of the 2nd Wasserstein distance
\cite{villani2003topics,villani2008optimal,santambrogio2015optimal,peyre2019computational}. Starting
from \cite{jordan1998variational,otto2001geometry}, it has been shown that many kinetic-type PDEs
can be viewed as a Wasserstein GD of free energies defined on probability spaces
\cite{carrillo2003kinetic}. In recent years, a parametric version of the Wasserstein metric has been
applied to various applied problems from statistical machine learning
\cite{chen2018natural,li2018natural}.

The quadratic term associated with the Mahalanobis metric appears quite often in partial
differential equation models, for example as the Dirichlet energy or as the interacting free energy
term in the Keller-Segel models \cite{Perthame2006transport}.

A recent paper \cite{ying2020mirror} considers the case where the loss function is the sum of the
Kullback-Leibler divergence and a quadratic interacting term. By adopting a diagonal approximation
of interacting term, it proposes new natural gradient dynamics and develops new mirror descent
algorithms.

\subsection{Contents.}
The rest of this note is organized as follows. Section \ref{sec:algo} proposes a new metric for the
combined loss functional and derives the natural gradient algorithm. In Section \ref{sec:res},
numerical results in 1D and 2D show that the proposed natural gradient outperforms the existing ones
for combined loss functionals. Finally, Section \ref{sec:disc} ends with some discussions on future
work.

\subsection{Data availability statement.}
Data sharing not applicable to this article as no datasets were generated or analyzed during the
current study.

\section{Algorithm} \label{sec:algo}

\subsection{Metric design} \label{sec:metric}

Consider the 1D problem with $\Omega=[0,1]$ with the periodic boundary condition for simplicity. As
mentioned above for the loss functional $E(p)=\alpha_1 E_1(p)+ \alpha_2 E_2(p)+ \alpha_3 E_3(p)$,
the Hessian can be approximated as follows.
\begin{equation} \label{eq:ddEddp}
  \frac{\delta^2 E}{\delta p^2}(p) =
  \alpha_1\frac{\delta^2 E_1}{\delta p^2}(p) +
  \alpha_2\frac{\delta^2 E_2}{\delta p^2}(p) +
  \alpha_3\frac{\delta^2 E_3}{\delta p^2}(p)
  \approx
  \alpha_1
(-\nabla\cdot(p\nabla))^+ + \alpha_2 \diag\left(\frac{1}{p}\right) + \alpha_3 A.
\end{equation}
For simplicity, assume that the domain $\Omega$ is discretized with a uniform grid with $n$ points
$S = \left\{\frac{0}{n},\frac{1}{n},\ldots,\frac{n-1}{n}\right\}$. A density $p(x)$ for
$x\in\Omega=[0,1]$ can be represented as a vector $p\in\R^n$ with entries denoted by $p_s$ for $s\in
S$.  We denote by $D$ the discrete differential operator. After the discretization, the Hessian
approximation \eqref{eq:ddEddp} takes the following discrete form
\begin{equation} \label{eq:Eppdisc}
  \frac{\delta^2 E}{\delta p^2}(p) \approx \alpha_1 \left(D^\T\diag(p)D\right)^+ + \alpha_2
  \diag\left(\frac{1}{p}\right) + \alpha_3 A.
\end{equation}

As mentioned earlier, the key idea is to diagonalize each of the three terms in \eqref{eq:Eppdisc}
in a compactly supported orthogonal wavelet basis such as the Daubechies wavelets
\cite{daubechies1992ten,mallat1999wavelet}. Let us denote by $W\in \R^{n\times n}$ the matrix such
that its $j$-th column is the $j$-th vector of the wavelet basis. Therefore, $W$ is the matrix for
wavelet reconstruction and its transpose $W^\T$ is the matrix for wavelet decomposition. Notice that
for compactly supported wavelets, $W$ and $W^\T$ are sparse matrices with only $O(n\log n)$ non-zero
entries. Applying $W$ or $W^\T$ to an arbitrary vector of length $n$ takes only $O(n)$ operations by
taking advantages of the filter bank structure of the wavelet basis \cite{mallat1999wavelet}.

Applying the matrices $W^\T$ to the left and $W$ to the right of \eqref{eq:Eppdisc} leads to
\[
W^\T \frac{\delta^2 E}{\delta p^2}(p) W
\approx
\alpha_1 W^\T \left(D^\T\diag(p)D\right)^+ W +
\alpha_2 W^\T \diag\left(\frac{1}{p}\right) W +
\alpha_3 W^\T A W.
\]
The three terms on the right hand side are treated as follows.
\begin{itemize}
\item For the first term, consider first its pseudo-inverse $W^\T D^\T \diag(p) D W$. The
  diagonal entries of $W^\T D^\T\diag(p)DW$ at the $(i,i)$ slot is given by
  \[
  \sum_{s\in S} (DW)_{si} p_s (DW)_{si} = \sum_{s\in S} (DW)_{si}^2 p_s.
  \]
  By defining the matrix $H_1$ with entries given by $(H_1)_{is} = (DW)_{si}^2$, the whole diagonal
  of $W^\T D^\T\diag(p)DW$ can be conveniently written as $H_1 p$, which clearly depends linearly on
  $p$. Taking its pseudo-inverse implies that $W^\T\left(D^\T\diag(p)D\right)^+ W$ can be diagonally
  approximated with $\diag\left(\frac{1}{H_1 p}\right)$.
  
\item For the second term, consider first its pseudo-inverse $W^\T \diag(p) W$. The diagonal
  entry $W^\T \diag(p) W$ at the $(i,i)$ slot is given by
  \[
  \sum_{s\in S} W_{si} p_s W_{si} = \sum_{s\in S} W_{si}^2 p_s.
  \]
  By defining the matrix $H_2 p$ with entries given by $(H_1)_{is} = W_{si}^2$, the whole diagonal
  of $W^\T \diag(p) W$ can be written as $H_2 p$, which is again linear in $p$. Taking its
  pseudo-inverse shows that $W^\T \diag\left(\frac{1}{p}\right) W$ can be diagonally approximated
  with $\diag\left(\frac{1}{H_2 p}\right)$.
\item As opposed to the first two terms, the third term $W^\T A W$ is independent of the density
  $p$. Its diagonal can be precomputed and will be denoted by $h_3 \in \R^n$.
\end{itemize}

Putting the three terms together, we conclude that 
\[
W^\T \frac{\delta^2 E}{\delta p^2}(p) W \approx
\diag\left(
\frac{\alpha_1}{H_1p} + \frac{\alpha_2}{H_2p} + \alpha_3 h_3
\right),
\]
or equivalently
\[
\frac{\delta^2 E}{\delta p^2}(p) \approx
W
\diag\left(
\frac{\alpha_1}{H_1p} + \frac{\alpha_2}{H_2p} + \alpha_3 h_3
\right)
W^\T.
\]
By inverting this approximation, we reach at the metric for the natural gradient
\begin{equation}\label{eqn:metric}
\left( \frac{\delta^2 E}{\delta p^2}(p) \right)^{-1} \approx
W
\diag
\left(
\frac{1}{
  \frac{\alpha_1}{H_1p} + \frac{\alpha_2}{H_2p} + \alpha_3 h_3
}
\right)
W^\T.
\end{equation}

With the metric ready, the ODE for the new natural gradient reads
\begin{equation}\label{eqn:pdot}
  \dot{p} = - W \diag \left( \frac{1}{\frac{\alpha_1}{H_1p} + \frac{\alpha_2}{H_2p} + \alpha_3 h_3}  \right) W^\T
  \frac{\delta E}{\delta p}(p).
\end{equation}
If we denote the wavelet coefficient vector by $c\in \R^n$, i.e., $c=W^\T p$ and $p = Wc$,
\eqref{eqn:pdot} can be written as
\begin{equation}\label{eqn:cdot}
  \dot{c} = -\diag \left( \frac{1}{\frac{\alpha_1}{H_1Wc} + \frac{\alpha_2}{H_2Wc} + \alpha_3 h_3} \right)
  \frac{\delta E}{\delta c}(c).
\end{equation}
In what follows, we simply refer to them as the {\em combined} gradient descent.

The following two claims show that the objects in \eqref{eqn:pdot} and \eqref{eqn:cdot} can be
computed efficiently.
\begin{claim}
  The computational cost of forming and storing the matrices $H_1$ and $H_2$ is $O(n\log n)$.
\end{claim}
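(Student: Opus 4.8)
The plan is to bound separately the storage cost (how many nonzero entries $H_1$ and $H_2$ can have) and the arithmetic cost of producing those entries, using in both cases the scale-and-shift structure of a compactly supported wavelet basis. Write $n=2^L$, index the wavelet scales by $j$ (there are $O(\log n)$ of them, together with $O(1)$ coarse scaling vectors), and recall from the discussion above that the periodic wavelet reconstruction matrix $W$ has only $O(n\log n)$ nonzero entries.

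\emph{Storage.} By definition $(H_2)_{is}=W_{si}^2$, so $H_2$ has exactly the sparsity pattern of $W^\T$ and hence $O(n\log n)$ nonzero entries. For $H_1$, note that the discrete differential operator $D$ is a banded circulant matrix with a fixed bandwidth $b=O(1)$; therefore each column of $DW$ is supported within distance $b$ of the support of the corresponding column of $W$, so $DW$ has at most $(2b+1)\cdot O(n\log n)=O(n\log n)$ nonzero entries, and $(H_1)_{is}=(DW)_{si}^2$ inherits this pattern. Thus storing $H_1$ and $H_2$ in sparse format costs $O(n\log n)$.

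\emph{Construction.} Group the columns of $W$ by scale: at scale $j$ there are $O(2^j)$ basis vectors, each supported on $O(n/2^j)$ consecutive grid points, and in the periodic setting these $O(2^j)$ vectors are cyclic shifts of a single representative $w^{(j)}$ by multiples of $n/2^j$; the same holds for the $O(1)$ coarse scaling vectors. It therefore suffices to (i) compute the $O(\log n)$ representatives $w^{(j)}$ and the coarse scaling vectors, each by running one inverse filter-bank reconstruction on a single unit wavelet coefficient — this cascade only ever touches $O(n/2^j)$ entries at scale $j$, so the total is $O(\sum_j n/2^j)=O(n)$; (ii) apply the $O(1)$-banded circulant operator $D$ to each $w^{(j)}$ to obtain $Dw^{(j)}$, at the same $O(n)$ order; and (iii) square the entries of $w^{(j)}$ and of $Dw^{(j)}$ and scatter them, with the appropriate cyclic shifts, into the $O(2^j)$ columns of $H_2$ and $H_1$ at scale $j$. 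Step (iii) writes exactly the number of nonzeros bounded in the storage step, namely $O(n\log n)$ entries, each placed in $O(1)$ time. Summing the three steps gives the claimed $O(n\log n)$.

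\emph{Main obstacle.} The only real subtlety is that forming $W$ naively — applying $W$ to each of the $n$ standard basis vectors — would cost $O(n^2)$; the point is that, up to periodic shifts, $W$ has only $O(\log n)$ essentially distinct columns, one per scale, each obtainable from a single short reconstruction. A secondary technical point is the periodic wraparound at the coarsest scales, where the ``support of size $O(n/2^j)$'' statement degrades toward $O(n)$; this is harmless since only $O(1)$ basis vectors live there, and it affects neither the storage bound nor the $O(n\log n)$ construction bound. One should also verify that $D$ being banded circulant is exactly what makes $Dw^{(j)}$, shifted, coincide with the corresponding column of $DW$, so that the shift structure is inherited by $H_1$.
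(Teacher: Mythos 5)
Your argument is correct and rests on the same core count as the paper's proof: at scale $\ell$ there are $O(2^\ell)$ wavelets each of support $O(n/2^\ell)$, so the total number of entries (and the work to produce them) is $\sum_\ell 2^\ell\cdot n/2^\ell=O(n\log n)$. The extra observations you add — the banded circulant structure of $D$, the periodic wraparound at coarse scales, and computing only one representative per scale via the filter bank — are sound refinements but do not change the approach or the bound.
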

\begin{proof}
  Let us recall the definition of the matrices $H_1$ and $H_2$
  \[
  (H_1)_{is} = (DW)_{si}^2,\quad
  (H_2)_{is} = W_{si}^2.
  \]
  Since the wavelets are compactly supported with a constant size support at the finest scale,
  applying the differential operator and taking the element-wise square for a wavelet at scale
  $\ell$ takes $O(n/2^\ell)$ steps. Summing over the wavelets from all scales gives the following
  bound for the total cost:
  \[
  \sum_{\ell=1}^{\log_2 n} 2^\ell \cdot \frac{n}{2^\ell} = O(n\log n).
  \]
\end{proof}

\begin{claim}
  For a density $p\in\R^n$ with $p_i >0$, the computational cost of applying the metric $W \diag
  \frac{1}{ \left( \frac{\alpha_1}{H_1p} + \frac{\alpha_2}{H_2p} + \alpha_3 h_3 \right) } W^\T$
  takes $O(n\log n)$ steps.
\end{claim}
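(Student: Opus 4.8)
The plan is to decompose a single application of the metric into a short pipeline of elementary operations and to bound the cost of each stage separately. Writing the input vector as $v\in\R^n$, we must evaluate
\[
W\,\diag\!\left(\frac{1}{\frac{\alpha_1}{H_1p}+\frac{\alpha_2}{H_2p}+\alpha_3 h_3}\right)W^\T v .
\]
The crucial structural observation is that the middle diagonal matrix depends only on $p$, and that assembling it requires nothing beyond the two matrix--vector products $H_1p$ and $H_2p$ followed by $O(n)$ scalar operations.

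First I would form the diagonal vector
\[
d \;=\; \frac{\alpha_1}{H_1p}+\frac{\alpha_2}{H_2p}+\alpha_3 h_3 \in \R^n ,
\]
with all divisions, sums, and scalings by $\alpha_j$ understood entrywise. By the previous claim the matrices $H_1$ and $H_2$ are stored with only $O(n\log n)$ nonzero entries, so the sparse products $H_1p$ and $H_2p$ each cost $O(n\log n)$; the vector $h_3$ has been precomputed, and the remaining reciprocals and additions cost $O(n)$. Next I would apply $W^\T$ to $v$, multiply the result entrywise by $1/d$, and then apply $W$. Because $W$ and $W^\T$ are realized through the filter-bank form of the compactly supported wavelet transform, each such application costs $O(n)$, and the entrywise rescaling costs $O(n)$. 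Adding the stages, the cost is dominated by the two sparse products, hence $O(n\log n)$ in total. (If several applications of the metric share the same $p$, the vector $d$ can of course be computed once and reused.)

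The only place the hypothesis $p_i>0$ is used, and the only point requiring a moment of care, is the well-definedness of $1/d$. For the Fisher-Rao-type term, $(H_2p)_i=\sum_{s\in S}W_{si}^2 p_s>0$ for every $i$, since each column of the orthogonal matrix $W$ is nonzero while every $p_s>0$. For the Wasserstein-type term, $(H_1p)_i=\sum_{s\in S}(DW)_{si}^2 p_s>0$ for every $i$ whose wavelet is not annihilated by the discrete derivative $D$; the sole exception is the index associated with the coarsest scaling function, where $DW$ has a zero column, and for that index the pseudo-inverse convention used in \eqref{eqn:metric} sets the corresponding reciprocal to $0$. Thus $d$ and the middle diagonal are well defined, and the claimed $O(n\log n)$ bound follows. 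I do not anticipate a genuine obstacle here; the only subtlety is this zero-mode bookkeeping, together with the observation that it is the sparse products $H_1p,H_2p$ — not the $O(n)$ wavelet transforms — that produce the $\log n$ factor.
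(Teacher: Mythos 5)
Your proposal is correct and follows essentially the same route as the paper's proof: the cost is dominated by the sparse products $H_1p$ and $H_2p$ at $O(n\log n)$, while the filter-bank applications of $W$ and $W^\T$ and the entrywise operations cost only $O(n)$. Your additional remarks on the positivity of $H_1p$ and $H_2p$ and the zero mode of $DW$ go slightly beyond what the paper records, but they do not change the argument.
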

\begin{proof}
  As a consequence from the previous claim, forming $H_1 p$ and $H_2 p$ each takes $O(n\log n)$
  steps. Applying the wavelet decomposition operator $W^\T$ or the reconstruction operator $W$ takes
  $O(n)$ steps by taking advantages of the filter bank construction. Summing them together gives the
  $O(n\log n)$ total cost.
\end{proof}

\subsection{Time discretization} \label{sec:algodetail}

Let us now describe the time discretization of the natural gradient dynamics \eqref{eqn:pdot}, i.e.,
how to actually use \eqref{eqn:pdot} to find the minimizer. We adopt a backtracking line search
algorithm with Armijo condition \cite{armijo1966minimization}. At time step $k$ with the current
approximation $p^k$, we introduce
\[
s^k = W \diag \left(
\frac{1}{\frac{\alpha_1}{H_1p^k} + \frac{\alpha_2}{H_2p^k} + \alpha_3 h_3}
\right)
W^\T \frac{\delta E}{\delta p}(p^k).
\]
Starting from $\eta=1$, one repetitively halves $\eta$ until
\[
E(p^k-\eta s^k) - E(p_k) \le -\frac{1}{2} \eta s^k\cdot \frac{\delta E}{\delta p}(p^k).
\]
Once it is reached, one sets
\[
p^{k+1}  = p^k - \eta s^k
\]
and move on to the next iteration until convergence.


\section{Numerical results} \label{sec:res}

This section presents several numerical examples to illustrate the efficiency of the combined
gradient descent \eqref{eqn:pdot} for the combined loss functionals.

\subsection{1D}
Consider first the 1D domain $\Omega=[0,1]$ with the periodic boundary condition.  Let $\mu$ be a
reference measure. Among the three terms of the combined loss functional $E(p)=\alpha_1
E_1(p)+\alpha_2 E_2(p)+\alpha_3 E_3(p)$, the first term $E_1(p)$ is a functional close to the square
of the 2nd Wasserstein distance $W_2(p,\mu)$ between $p$ and $\mu$. Because the exact computation of
$W_2^2(p,\mu)$ and its derivative with respect to $p$ is quite non-trivial, we replace $E_1(p)$ with
the square of the weighted semi $H^{-1}$-norm
\[
E_1(p) = \frac{1}{2} \norm{p-\mu}^2_{\dot{H}^{-1}(\mu)},
\]
which is known to be equivalent to the square of the $W_2$ norm \cite{peyre2018comparison}. 
The $\dot{H}^1(\mu)$ for a signed measure $\eps$ is defined as
\[
\norm{\eps}_{\dot{H}^{-1}(\mu)} = \min_{\theta: \nabla\cdot(\mu \theta)=\eps} \int |\theta|^2 \d\mu,
\]
or equivalently
\[
\norm{\eps}_{\dot{H}^{-1}(\mu)} = \sup \left\{ (f,\eps) : \norm{f}_{\dot{H}^1(\mu)} \le 1 \right\},\quad
\norm{f}_{\dot{H}^1(\mu)} = \int |\nabla f|^2 \d\mu.
\]
After discretization, $E_1(p)$ takes the following simple form
\[
E_1(p) = \frac{1}{2} (p-\mu)^\T (D^\T \mu D)^+ (p-\mu).
\]
The second term $E_2(p)$ is the Kullback-Leibler divergence
\[
E_2(p) = \sum_s p_s \log \frac{p_s}{\mu_s}.
\]
Finally, the last term $E_3(p)$ is the Dirichlet energy given by
\[
E_3(p) =\frac{1}{2} (p-\mu)^\T (-\Delta) (p-\mu) = \frac{1}{2} (p-\mu)^\T D^\T D (p-\mu),
\]
so $A = (-\Delta)$. The minimizer of $E(p)$ is equal to $\mu$.

The domain is discretized with $n=512$ grid points. The reference measure $\mu(s) \sim \exp(-V(s))$
with $V(s) = \sin(4\pi s)$ for $s\in S$. The constant factors in front of the three terms are chosen
to be $1$, $10^{-3}$, and $10^{-4}$, respectively, in order to balance the contribution from three
terms so that none of them dominates. We test with four different linear combinations, with results
summarized in Figure \ref{fig:1D}.

\begin{figure}[h!]
  \centering
  \begin{tabular}{cc}
    \includegraphics[width=0.45\textwidth]{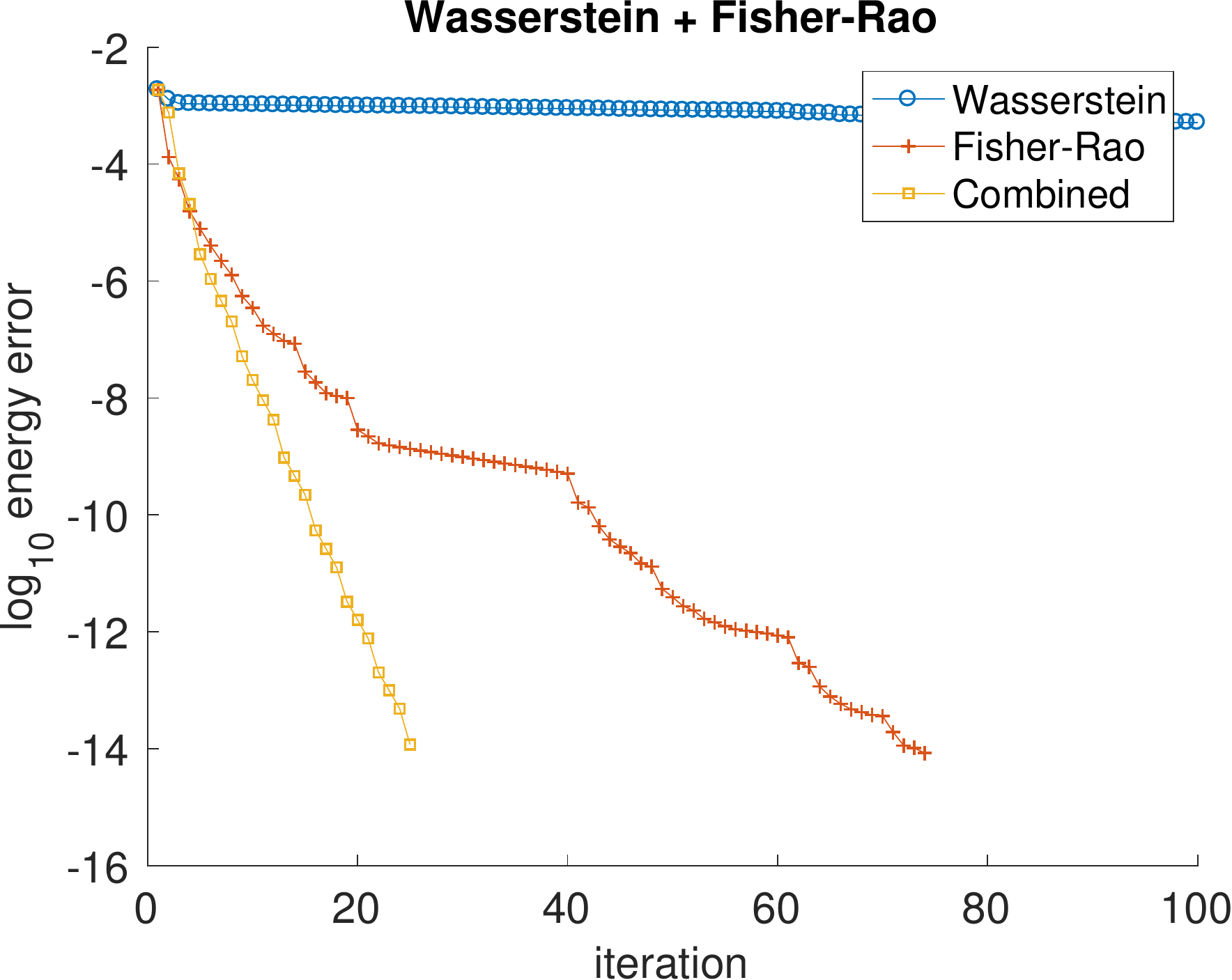} &   \includegraphics[width=0.45\textwidth]{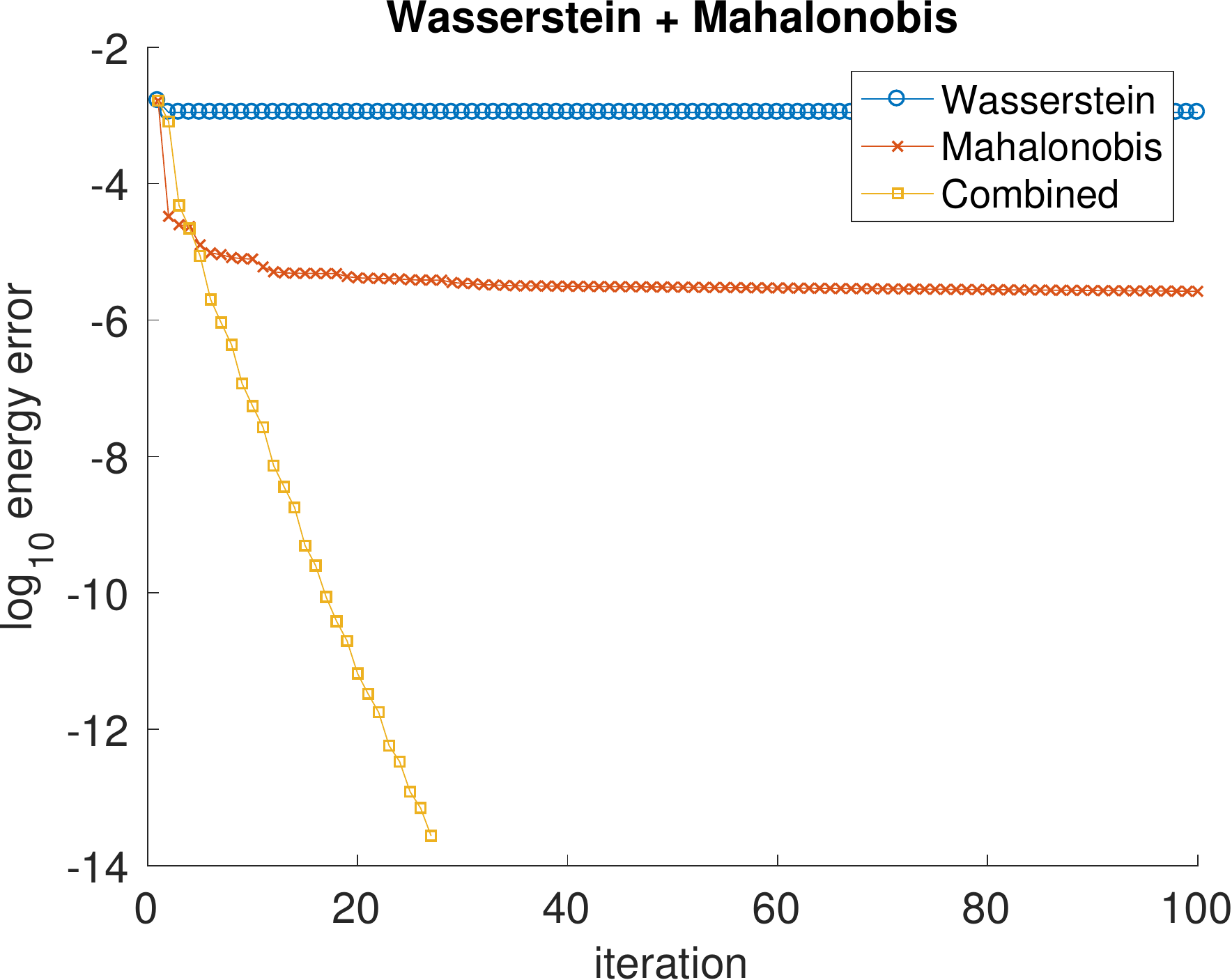}\\
    (1) & (2)\\
    \includegraphics[width=0.45\textwidth]{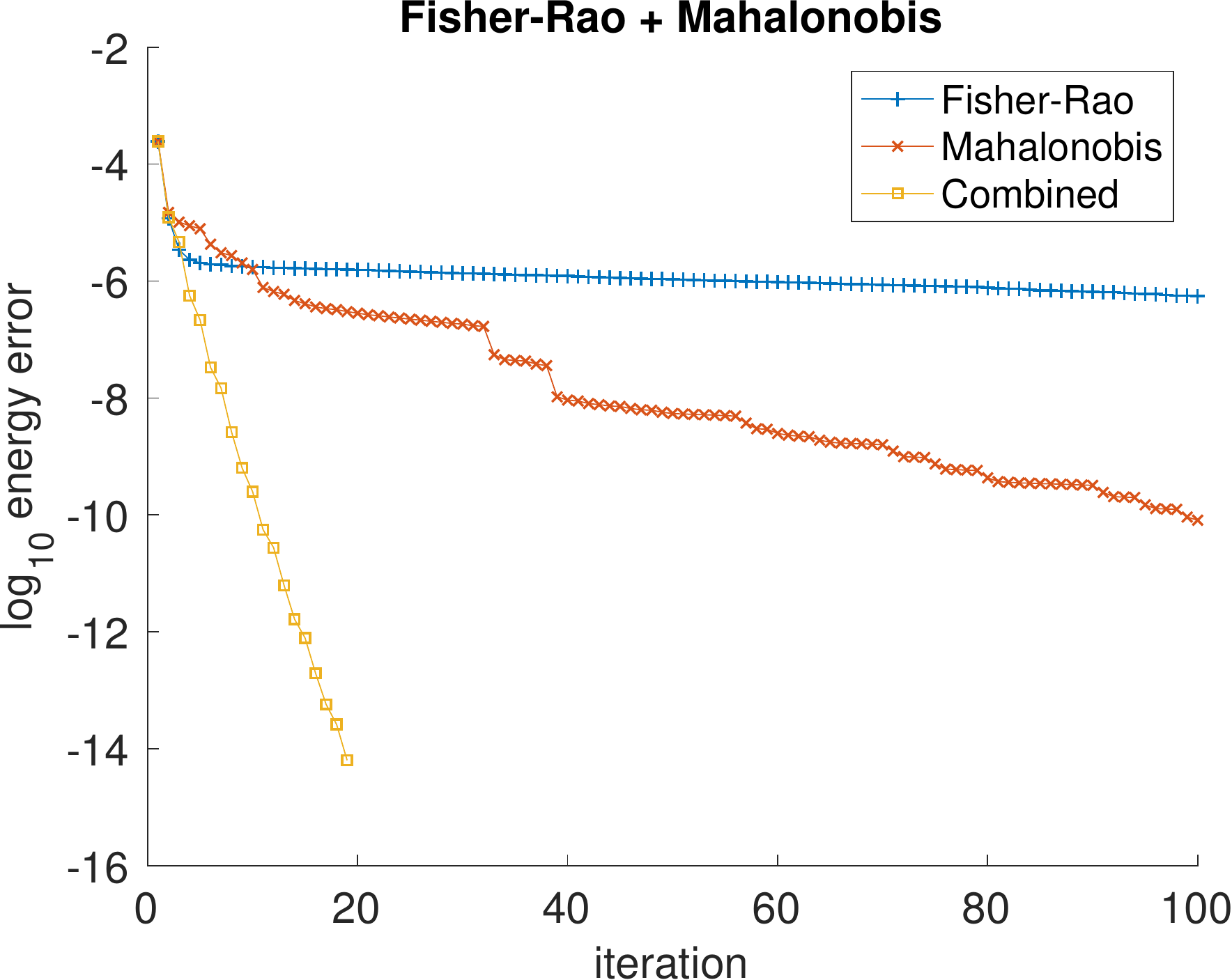} &  \includegraphics[width=0.45\textwidth]{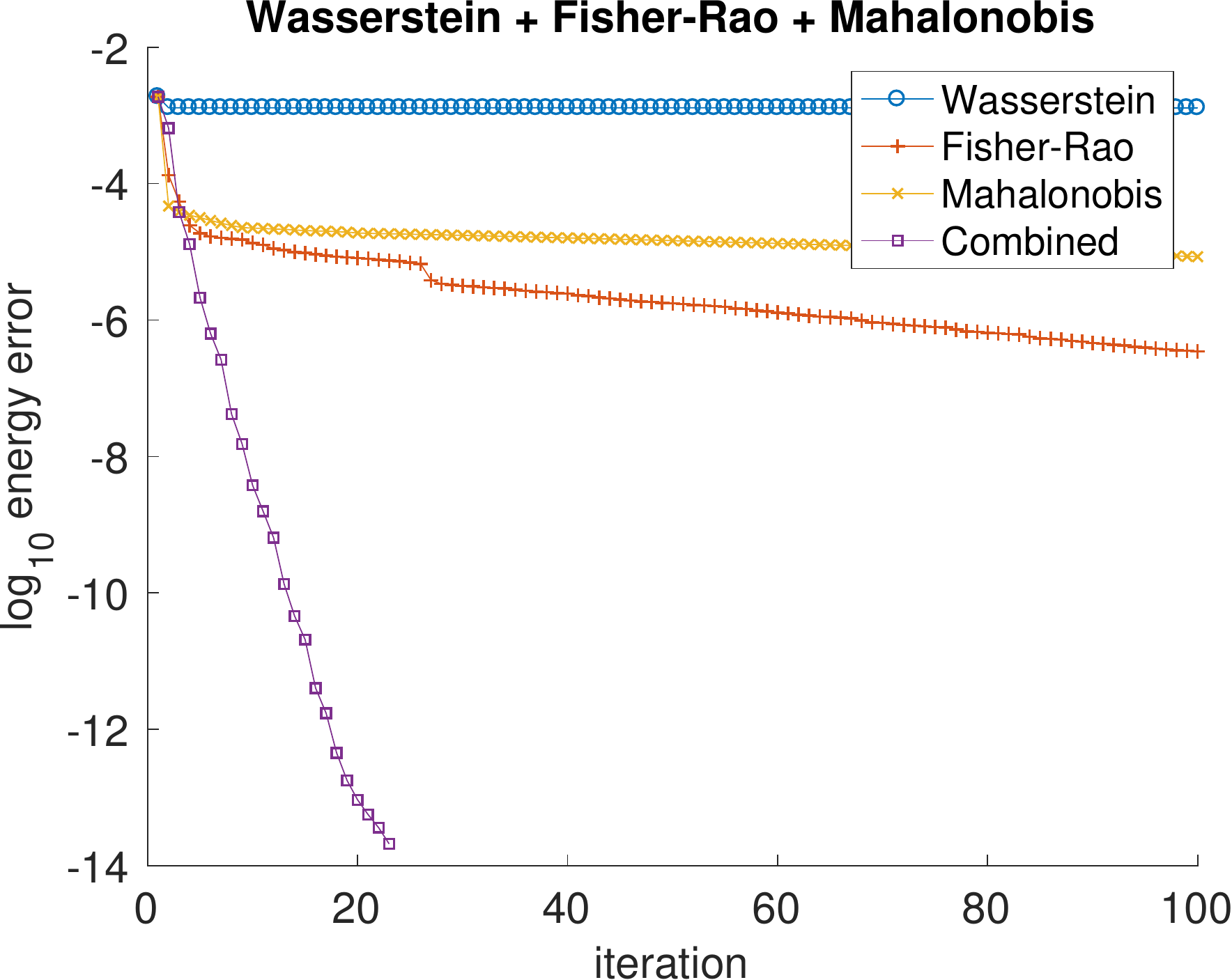}\\
    (3) & (4)
  \end{tabular}
  \caption{ (1) Wasserstein plus Fisher-Rao terms tested with Wasserstein GD, Fisher-Rao GD, and the
    combined natural gradient. (2) Wasserstein plus Mahalanobis terms tested with Wasserstein GD,
    Mahalanobis GD, and the combined natural gradient. (3) Fisher-Rao plus Mahalanobis terms tested
    with Fisher-Rao GD, Mahalanobis GD, and the combined natural gradient. (4) Wasserstein plus
    Fisher-Rao plus Mahalanobis terms tested with Wasserstein GD, Fisher-Rao GD, Mahalanobis GD, and
    the combined natural gradient.}
  \label{fig:1D}
\end{figure}

\begin{enumerate}
\item $(\alpha_1,\alpha_2,\alpha_3) = (1,10^{-3},0)$, i.e., turning off the Mahalanobis
  term. The combined natural gradient converges much more rapidly compared to the Wasserstein GD and
  the Fisher-Rao GD.
\item $(\alpha_1,\alpha_2,\alpha_3) = (1,0,10^{-4})$, i.e., turning off the Fisher-Rao term.
  The combined natural gradient converges much more rapidly compared to the Wasserstein GD and the
  Mahalanobis GD.
\item $(\alpha_1,\alpha_2,\alpha_3) = (0,10^{-3},10^{-4})$, i.e., turning off the Wasserstein
  term. The combined natural gradient converges much more rapidly compared to the
  Fisher-Rao GD and the Mahalanobis GD.
\item $(\alpha_1,\alpha_2,\alpha_3) = (1,10^{-3},10^{-4})$. The combined natural gradient
  converges much more rapidly compared to the Wasserstein GD, the Fisher-Rao GD, and the Mahalanobis
  GD.  
\end{enumerate}

\subsection{2D}
Consider now the 2D domain $\Omega=[0,1]^2$ with periodic boundary condition.  Among the three terms
of the combined loss functional $E(p)=\alpha_1 E_1(p)+\alpha_2 E_2(p)+\alpha_3 E_3(p)$, $E_1(p)$ is
again chosen to be the weighted semi $H^{-1}$-norm
\[
E_1(p) = \frac{1}{2} \norm{p-\mu}_{\dot{H}^{-1}(\mu)}.
\]
After discretization, it takes the following form
\[
E_1(p) = \frac{1}{2} (p-\mu)^\T (D_1^\T \mu D_1 + D_2^\T \mu D_2)^+ (p-\mu)
\]
where $D_1$ and $D_2$ are the derivative operators in the first and the second directions.  $E_2(p)$
is again the Kullback-Leibler divergence
\[
E_2(p) = \sum_s p_s \log \frac{p_s}{\mu_s}.
\]
Finally, $E_3(p)$ is given by
\[
E_3(p) = \frac{1}{2} (p-\mu)^\T (-\Delta) (p-\mu) = \frac{1}{2} (p-\mu)^\T (D_1^\T D_1 + D_2^\T D_2) (p-\mu) 
\]
so $A = (-\Delta)$.

The domain is discretized with $n=64$ grid point in each direction. $\mu(s_1,s_2)\sim
\exp(-V(s_1,s_2))$ with $V(s_1,s_2) = \sin(4\pi s_1)\sin(4\pi s_2)$. The constant factors of the
three terms are set to be $1$, $3\cdot10^{-4}$, and $10^{-4}$ in order to balance the contribution
from them so that no one dominates. We test with four different linear combinations, with the
results summarized in Figure \ref{fig:2D}.

\begin{figure}[h!]
  \centering
  \begin{tabular}{cc}
    \includegraphics[width=0.45\textwidth]{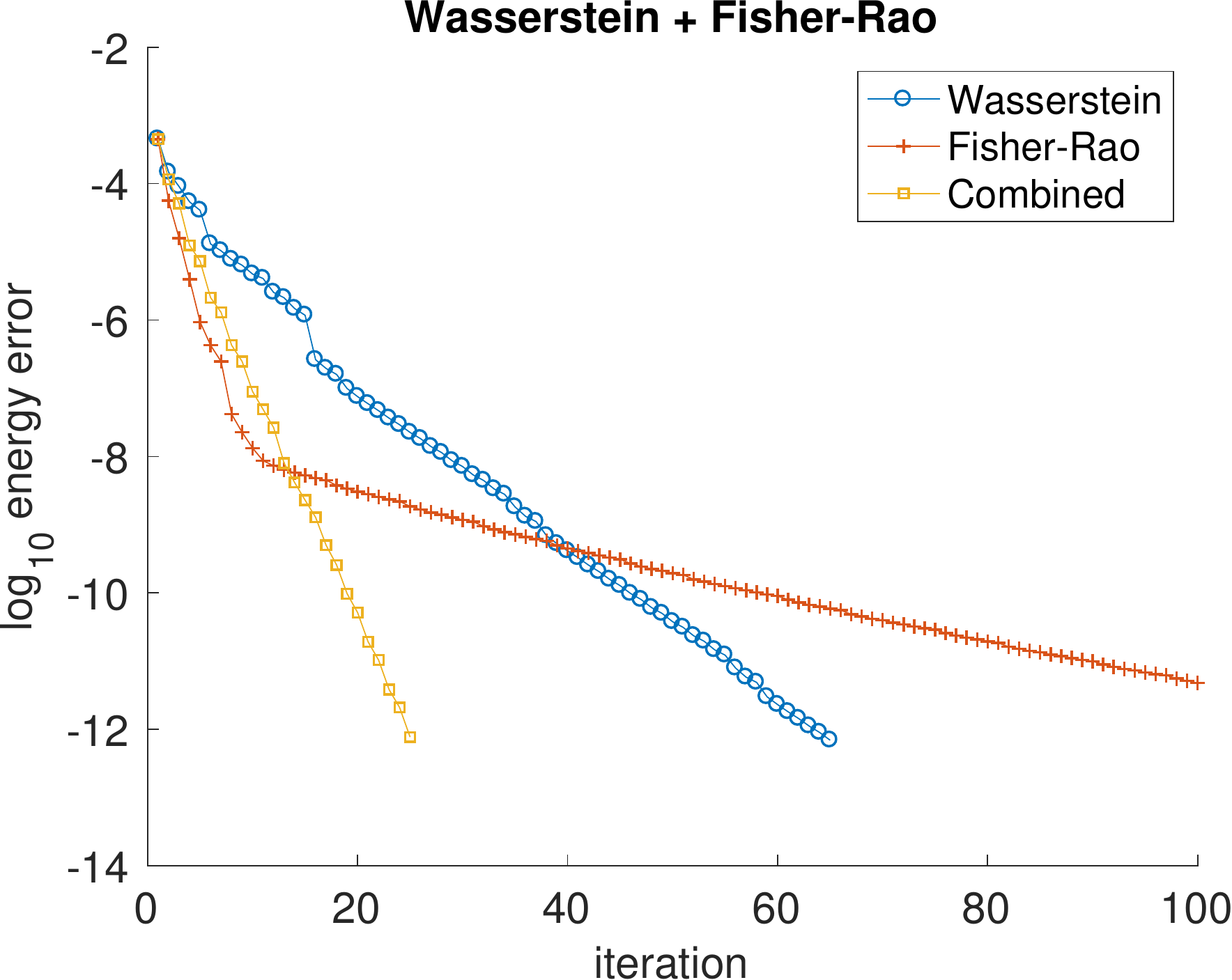} &    \includegraphics[width=0.45\textwidth]{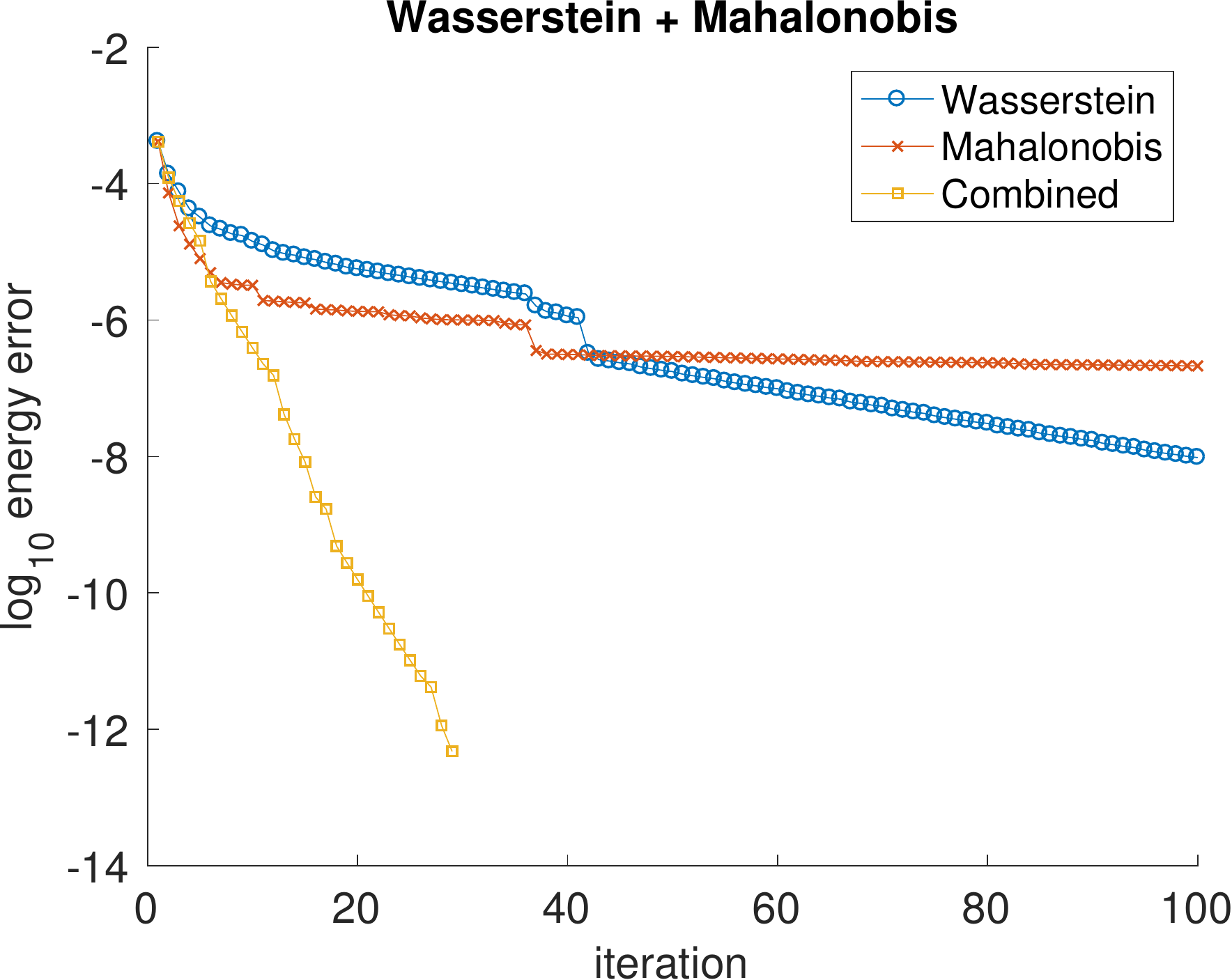}\\
    (1) & (2)\\
    \includegraphics[width=0.45\textwidth]{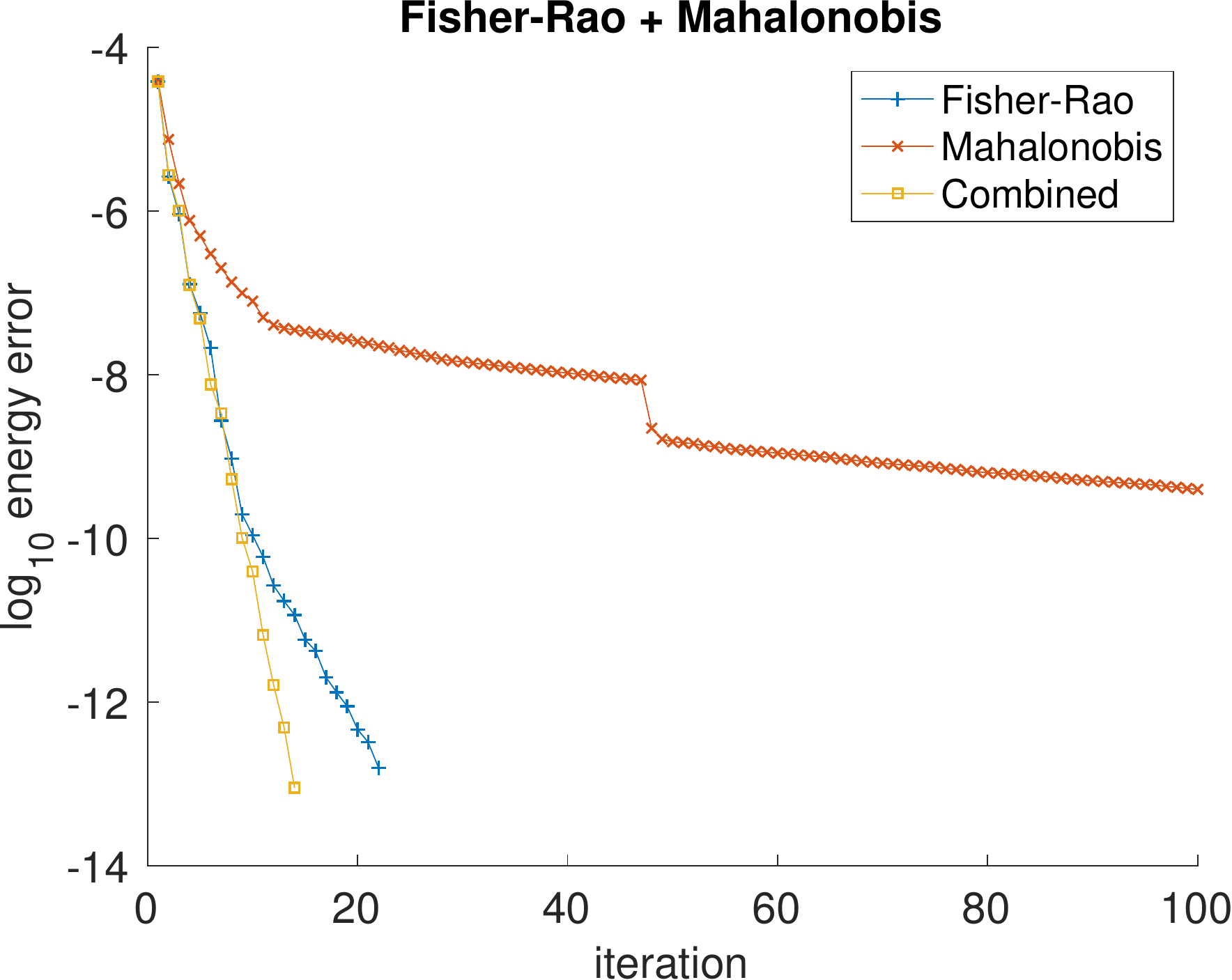} &    \includegraphics[width=0.45\textwidth]{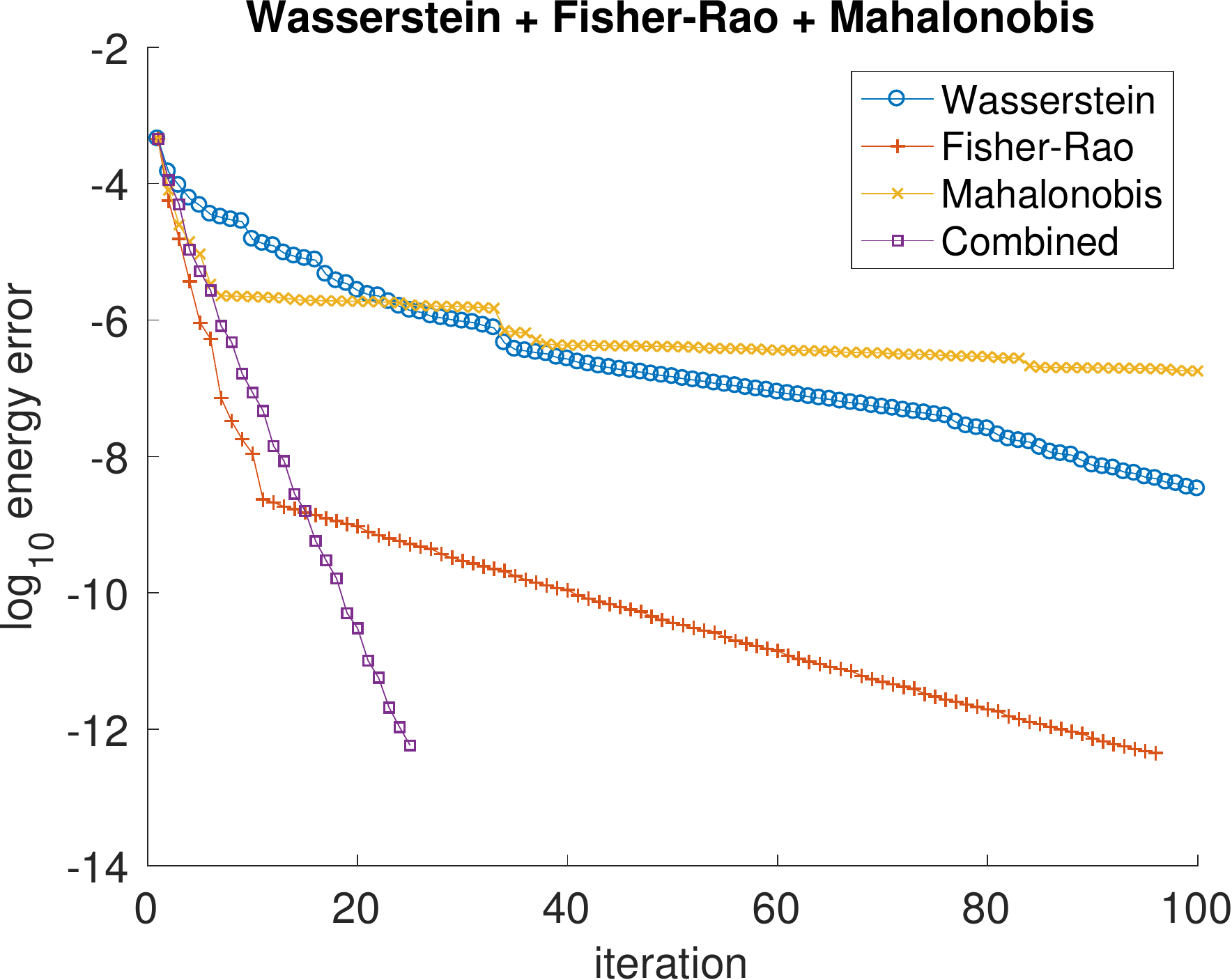}\\
    (3) & (4)
  \end{tabular}
  \caption{ (1) Wasserstein plus Fisher-Rao terms tested with Wasserstein GD, Fisher-Rao GD, and the
    combined natural gradient. (2) Wasserstein plus Mahalanobis terms tested with Wasserstein GD,
    Mahalanobis GD, and the combined natural gradient. (3) Fisher-Rao plus Mahalanobis terms tested
    with Fisher-Rao GD, Mahalanobis GD, and the combined natural gradient. (4) Wasserstein plus
    Fisher-Rao plus Mahalanobis terms tested with Wasserstein GD, Fisher-Rao GD, Mahalanobis GD, and
    the combined natural gradient.}
  \label{fig:2D}
\end{figure}

\begin{enumerate}
\item $(\alpha_1,\alpha_2,\alpha_3) = (1,3\cdot10^{-4},0)$, i.e., turning off the Mahalanobis
  term. The combined natural gradient converges much more rapidly compared to the Wasserstein GD and
  the Fisher-Rao GD.
\item $(\alpha_1,\alpha_2,\alpha_3) = (1,0,10^{-4})$, i.e., turning off the Fisher-Rao term. The
  combined natural gradient converges much more rapidly compared to the Wasserstein GD and the
  Mahalanobis GD.
\item $(\alpha_1,\alpha_2,\alpha_3) = (0,3\cdot10^{-4},10^{-4})$, i.e., turning off the
  Wasserstein term. The combined natural gradient converges much more rapidly compared to the
  Fisher-Rao GD and the Mahalanobis GD.
\item $(\alpha_1,\alpha_2,\alpha_3) = (1,3\cdot10^{-4},10^{-4})$. The combined natural gradient
  converges much more rapidly compared to the Wasserstein GD, the Fisher-Rao GD, and the Mahalanobis
  GD.
\end{enumerate}

\section{Discussions} \label{sec:disc}

This note proposes a new natural gradient for minimizing combined loss functionals by using
diagonal approximation in the wavelet basis. There are a few open questions. First, so far we have
considered regular domains in one and two dimensions with periodic boundary condition. One direction
is to extend this to more general domains using more sophisticated wavelet bases.

Second, we have assumed that the probability density $p$ is non-vanishing everywhere in deriving the
interpolating natural gradient metric. It is an important question whether one can remove this
condition in order to work with more general probability densities.

Third, the dynamics in the wavelet coefficients \eqref{eqn:cdot} enjoys a diagonal metric. It is
tempting to ask whether it is possible to design a mirror descent algorithm. Due to the coupling
between different wavelet coefficients in the metric computation $H_1 Wc$ and $H_2 Wc$, this seems
quite difficult. An interesting observation is that the metric of the coarse scale wavelet
coefficients is nearly independent of the values of the fine scale coefficients, while the metric of
the fine scale ones depends heavily on the values of the coarse scale ones. This naturally brings
the question of whether the combined metric (or even the Wasserstein metric) has an approximate
multiscale structure. The wavelet analysis has played an important role understanding the earth
mover distance metric $W_1$ \cite{indyk2003fast,shirdhonkar2008approximate}. It seems that it might
also play a role in understanding the $W_2$ metric.

\bibliographystyle{abbrv}

\bibliography{ref}

\end{document}